\makeatletter \theoremstyle{plain}
 \newtheorem{thm}{Theorem}[section]
 \newtheorem{lem}[thm]{Lemma}
 \newtheorem{prop}[thm]{Proposition}
 \numberwithin{equation}{section} 
 \numberwithin{figure}{section} 
 \theoremstyle{plain}
 \theoremstyle{definition}
\newcommand{\calE}{{{\mathcal E}}}
\newcommand{\mH}{{{\mathbb H}}}
\newcommand{\calR}{{{\mathcal R}}}
\newcommand{\C}{{{\mathbb C}}}
\newcommand{\R}{{{\mathbb R}}}
\begin{document}

\title [Modulus of the Kor\'anyi ellipsoidal ring]
{The modulus of the Kor\'anyi ellipsoidal ring}

\author{Gaoshun Gou \& Ioannis D. Platis}

\address {Department of Mathematics,
Hunan-University,
Changsha {\rm 410082},
P. R. China.}
\email{gaoshungou@hnu.edu.cn}
\address{Department of Mathematics and Applied Mathematics,
University of Crete,
University Campus,
GR-70013 Heraklion Crete,
 Greece.}
\email{jplatis@math.uoc.gr}

\keywords{Kor\'anyi ellipsoidal rings, Kor\'anyi spherical rings, modulus, conformal capacity, Heisenberg group\\
{\it 2010 Mathematics Subject Classification:} 30L10, 30C75.}

\begin{abstract}
The Kor\'anyi ellipsoidal ring $\mathcal{E}$ of radii $B$ and $A$, $0<B<A$, is defined as the image of the Kor\'anyi spherical ring of the same radii  and centred at the origin via a linear contact map $L$ in the Heisenberg group. If $K\ge 1$ is the maximal distortion of $L$ then we prove that the modulus of $\mathcal{E}$ is equal to $$
{\rm mod}(\calE)=\left(\frac{3}{8}\Big(K^2+\frac{1}{K^2}\Big)+\frac{1}{4}\right)\frac{\pi^2}{(\log (A/B))^3}.
$$

\end{abstract}

\thanks{Part of this work has been carried out while IDP was visiting Hunan University, Changsha, PRC. Hospitality is gratefully appreciated. }

\maketitle

\section{Introduction}

In contrast to the complex plane case, conformal mappings in the Heisenberg group $\mathbb{H}^1$ comprise only the group of M\"obius transformations. This, together with the lack of analogues of Riemann Mapping and Steiner's Symmetrisation Theorems, require new techniques for the calculation of moduli of curve families in $\mathbb{H}^1$ since that there can be no standard normalisation for an arbitrary domain in $\mathbb{H}^1$. Recall that the Heisenberg group $\mathbb{H}^1$ is $\C\times\R$ with multiplication law
$$
(z,t)*(w,s)=(z+w,t+s+\Im(z\overline{w}),
$$
for each $(z,t),(w,s)\in\C\times\R$. It is a 2-step nilpotent Lie group; it is the model for both contact and sub-Riemannian geometry. There are two standard metrics in $\mathbb{H}^1$: The sub-Riemannian (Carnot-Carath\'eodory) metric ${\rm d}_{cc}$ and the Kor\'anyi metric ${\rm d}_{\mathbb{H}^1}$; the latter is not a path metric but it is bi-Lipschitz equivalent to ${\rm d}_{cc}$. 

Quasiconformal mappings between domains of $\mathbb{H}^1$ may be equivalently defined in terms of both ${\rm d}_{cc}$ and  ${\rm d}_{\mathbb{H}^1}$. The study of such mappings go back to the pioneering works of Kor\'anyi-Reimann, \cite{KR85,KR95} and Pansu, \cite{P89} and since then it has been generalised to more abstract spaces, see for instance \cite{He-KO98}. A primary goal in this area of study is to understand the similarities as well as the differences between the quasiconformal mapping theory in the sub-Riemannian and the Riemannian case, in particular the complex case where we have the powerful tool of Ahlfors-Bers theory. It is therefore important to study {\it extremal} quasiconformal mappings, that is, mappings with constant maximal distortion, between domains in $\mathbb{H}^1$. There is a long history in the study of extremal mappings in the complex plane. However, for the Heisenberg group case the picture is quite different: the lack of tools analogues to such as Teichm\"uller's Existence and Uniqueness Theorems turn the solution of problems of extremality into a quite difficult task. So far, calculation of moduli of curve families and in particular, capacities of condensers (see Section \ref{sec-prel} for the definitions) has been proved fundamental. For instance, in \cite{BFP13} the problem of finding the minimiser of a mean distortion integral of mappings belonging into a family of quasiconformal mappings between Kor\'anyi spherical rings is solved by using a modulus method; the solution for this problem is the radial stretch map which also minimizes the maximal distortion within the subclass of sphere-preserving mappings. 

A Kor\'anyi spherical ring is a domain bounded between two ${\rm d}_{{\mathbb H}^1}$ metric spheres of radii $B$ and $A$, $0<B<A$. Kor\'anyi and Reimann proved in \cite{KR87} that its modulus is equal to $\pi^2(\log(A/B))^{-3}$. Platis also showed in \cite{P} that the modulus of a {\it revolution ring}, that is, a domain bounded between two dilated images $D_B(\mathcal{S})$ and $D_A(\mathcal{S})$ of a surface of revolution $\mathcal{S}$, is also equal to $\pi^2(\log(A/B))^{-3}$.

In this paper we turn our attention into {\it Kor\'anyi ellipsoidal rings} in the Heisenberg group. These are images of Kor\'anyi spherical rings by linear contact quasiconformal maps. In Section \ref{sec-main} we prove the following: 

\medskip

\begin{thm}\label{thm1}
Let $\calR$ be a Kor\'anyi spherical ring centred at the origin and with radii  $B$ and $A$, $0<B<A$, and let the Kor\'anyi ellipsoidal ring $\calE=L_{a,b}(\calR)$ where $L_{a,b}$ is the contact linear map with distortion $K=a/b>1$. Then the modulus ${\rm mod}(\calE)$ is
$$
{\rm mod}(\calE)=\left(\frac{3}{8}\Big(K^2+\frac{1}{K^2}\Big)+\frac{1}{4}\right)\frac{\pi^2}{(\log (A/B))^3}.
$$
\end{thm}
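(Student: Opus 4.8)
The plan is to compute the modulus directly from its definition as a conformal capacity integral, exploiting the fact that the extremal function for the Korányi spherical ring is already known and that $L_{a,b}$ is a linear contact map whose effect on the relevant geometric quantities is explicitly controllable. Recall that $\mathrm{mod}(\calE)=\inf_u\int_{\calE}|\nabla_0 u|^4\,dV$, where the infimum is taken over admissible functions $u$ (equal to $0$ on the inner boundary and $1$ on the outer boundary of $\calE$), $\nabla_0$ is the horizontal gradient, and the exponent $4$ is the homogeneous dimension of $\mathbb{H}^1$. The first step is to transport this problem back to the spherical ring $\calR$ via the map $L_{a,b}$. Since $\calE=L_{a,b}(\calR)$, any admissible $u$ on $\calE$ pulls back to an admissible $v=u\circ L_{a,b}$ on $\calR$, so I would derive the change-of-variables formula relating $\int_{\calE}|\nabla_0 u|^4\,dV$ to an integral over $\calR$ of the form $\int_{\calR}\big(|\nabla_0 v|^2_{G}\big)^2\,J\,dV$, where $G$ is the symmetric matrix encoding how $L_{a,b}$ distorts the horizontal metric and $J$ is the (constant) Jacobian factor of the linear map.

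The second step is to identify the pulled-back extremal explicitly. Since the radial stretch / logarithmic function is extremal for the spherical ring by \cite{KR87,BFP13}, the natural candidate for the extremal on $\calE$ is the function whose pullback to $\calR$ is precisely the Korányi-logarithmic extremal $v(z,t)=\log\big(N(z,t)/B\big)/\log(A/B)$, where $N$ denotes the Korányi gauge. I would substitute this candidate into the transported capacity integral. The heart of the computation is then to evaluate $\int_{\calR}\big(|\nabla_0 v|^2_{G}\big)^2\,J\,dV$ in Korányi polar-type coordinates adapted to the gauge, where $|\nabla_0 v|$ for the log-gauge function has a known closed form. The distortion matrix $G$ acts on the two horizontal directions through the eigenvalues $a^2$ and $b^2$ (equivalently $K$ and $1/K$), so the quadratic form $|\nabla_0 v|^2_G$ becomes a combination weighted by these factors, and squaring it produces terms in $K^2$, $1/K^2$, and cross terms; integrating the angular part over the Korányi sphere is what generates the rational coefficients $3/8$ and $1/4$.

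The third step, after obtaining the upper bound $\mathrm{mod}(\calE)\le\big(\tfrac38(K^2+K^{-2})+\tfrac14\big)\pi^2(\log(A/B))^{-3}$ from this trial function, is to prove the matching lower bound, and I expect this to be the main obstacle. Upper bounds follow from exhibiting any admissible competitor, but lower bounds require showing no admissible function does better, which ordinarily demands either a dual (length-based) modulus estimate or a verification that the candidate satisfies the Euler--Lagrange equation together with a convexity/uniqueness argument. The cleanest route is to check that the transported problem inherits extremality from the spherical case: because $L_{a,b}$ is a contact map, the curve family connecting the boundaries of $\calE$ is the $L_{a,b}$-image of the curve family for $\calR$, and the modulus method should transform covariantly. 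I would therefore try to show that the logarithmic candidate verifies the sub-elliptic Euler--Lagrange equation for the weighted $4$-energy on $\calR$, invoking the strict convexity of the energy functional to conclude uniqueness and hence that the upper bound is sharp. The delicate point will be confirming that the weight $G$ introduced by the distortion does not spoil the separation-of-variables structure that made the angular integration tractable, and handling carefully the set where $\nabla_0 v$ degenerates (the center of the gauge and the characteristic locus) so that the admissibility and the integration-by-parts in the lower-bound argument are fully justified.
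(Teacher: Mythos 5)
Your proposal is correct in outline, and its first half coincides with the paper's: your candidate extremal (the function on $\calE$ whose pullback under $L_{a,b}$ is the logarithm of the Kor\'anyi gauge) is exactly the paper's trial function $u_0$, and your pulled-back weighted energy $\int_{\calR}\big(\tfrac{1}{K}(Xv)^2+K(Yv)^2\big)^2\,{\rm d}\mathcal{L}^3$ is the same integral the paper evaluates directly on $\calE$ in Kor\'anyi ellipsoidal coordinates, with the same angular integrations producing $\tfrac38\big(K^2+K^{-2}\big)+\tfrac14$. (Note also that your opening identification ${\rm mod}(\calE)=\inf_u\int_{\calE}|\nabla^h u|^4\,{\rm d}\mathcal{L}^3$ is itself the content of Proposition \ref{pro1}, which the paper invokes at the end rather than the beginning.) Where you genuinely diverge is the lower bound. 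The paper never touches the Euler--Lagrange equation: it takes the subfamily $\Gamma_0$ of integral curves of $\nabla^h u_0$ inside the family $\Gamma$ of curves joining the boundary components, derives the ODEs of these flow lines in ellipsoidal coordinates, and applies H\"older's inequality to the admissibility condition $\int_\gamma\rho\,{\rm d}s\ge 1$ to get ${\rm mod}(\Gamma_0)\ge\big(\tfrac38(K^2+K^{-2})+\tfrac14\big)\pi^2(\log A)^{-3}$; since ${\rm mod}(\Gamma_0)\le{\rm mod}(\calE)={\rm cap}(\calE)$, this closes the squeeze against Step 1. Your route---verify that $u_0$ is a weak solution of the degenerate equation $X\big(|\nabla^h u|^2\,Xu\big)+Y\big(|\nabla^h u|^2\,Yu\big)=0$ (equivalently, that $\log N$ solves the anisotropically weighted equation on $\calR$), then use convexity of the $4$-energy so that an admissible critical point is a global minimizer---is the classical strategy going back to \cite{KR87}, and it would work, because $u_0$ is in fact extremal. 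But be clear about what it costs. First, the Euler--Lagrange verification is a genuine computation that is \emph{not} inherited from the spherical case: the modulus is not a quasiconformal invariant, so extremality of $\log N$ for the unweighted problem on $\calR$ says nothing a priori about the weighted problem, and your phrase that the modulus method ``transforms covariantly'' under $L_{a,b}$ is misleading---this is precisely why the answer is not $\pi^2(\log(A/B))^{-3}$ times a power of $K$. Second, you must justify the weak formulation across the set where $\nabla^h u_0$ vanishes (the $t$-axis) and supply the density argument allowing $w-u_0$ as a test function for every admissible competitor $w$; also, uniqueness is not what you need---convexity making any critical point a global minimizer suffices. The paper's flow-line/H\"older argument buys the avoidance of all these PDE technicalities at the price of solving the flow-line ODE system; your argument, if completed, buys the stronger conclusion that $u_0$ is the extremal function for ${\rm cap}(\calE)$.
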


\medskip

The method we use in order to prove this theorem is based on the fact that the modulus of the curve family of curves joining the two boundary pieces of a condenser equals to the capacity of the condenser, see Section \ref{sec-modcap}. Proposition \ref{pro1} which establishes this result is the powerful tool behind our rather simple method of calculating the modulus of $\calE$. We finally remark that Theorem \ref{thm1} may be viewed as the starting point for the problem of finding extremal quasiconformal mappings between Kor\'anyi ellipsoidal rings; we shall be concerned with this problem elsewhere.

\section{Preliminaries}\label{sec-prel}
In this section we describe in brief some well known facts about the Heisenberg group. 
For more information, we refer for instance to \cite{KR85,KR95,He-KO98}.

\subsection{The Heisenberg group} 

The (first) Heisenberg group $\mathbb{H}^1$ is the set $\mathbb{C}\times\mathbb{R}$ with multiplication $*$ given by
$$
(z,t)*(w,s)=(z+w,t+s+2\Im(z\overline{w})),
$$
for every $(z,t)$ and $(w,s)$ in $\mathbb{H}^1$. The {\it Kor\'anyi gauge} is defined by
$$
|(z,t)|_{\mathbb{H}^1}=||z|^2-it|^{1/2},
$$
for each $(z,t)\in\mathbb{H}^1$.
Then $\mathbb{H}^1$ can be endowed with the {\it Kor\'anyi-Cygan metric} given by
$$
\mathrm{d}_{\mathbb{H}^1}((z,t),(w,s))=|(z,t)^{-1}*(w,s)|_{\mathbb{H}^1},
$$
which is invariant under the left action of $\mathbb{H}^1$. 

The Heisenberg group $\mathbb{H}^1$ is a 2-step nilpotent Lie group; we consider the basis for the left invariant vector fields of $\mathbb{H}^1$ comprising
$$
X=\frac{\partial}{\partial x}+2y\frac{\partial}{\partial t},\quad
Y=\frac{\partial}{\partial y}-2x\frac{\partial}{\partial t},\quad
T=\frac{\partial}{\partial t}.
$$
Denote $\mathfrak{h}$ to be the Lie algebra of $\mathbb{H}^1$. Then there exists a decomposition
$$
\mathfrak{h}=V^1\oplus V^2,
$$
where
$$
V^1={\rm span}_\mathbb{R}\left\{X,Y\right\}\; \;\rm and\; \;V^2={\rm span}_\mathbb{R}\left\{T\right\}.
$$
The {\it contact structure} of $\mathbb{H}^1$ is induced by the 1-form $\omega$ of $\mathbb{H}^1$ given by
$$
\omega={\rm d}t+2(x{\rm d}y-y{\rm d}x)={\rm d}t+2\Im(\overline{z}{\rm d}z),
$$
where $z=x+iy$. By the contact version of Darboux's Theorem, $\omega$ is the unique 1-form  such that $X,\,Y\in\ker\omega,\,\omega(T)=1$.
For each point $p\in \mathbb{H}^1$, $V^1_p=H_p(\mathbb{H}^1)$ is the {\it horizontal tangent space of $\mathbb{H}^1$ at $p$}.
Considering the relations
$$\langle X,X\rangle=\langle Y,Y\rangle=1,\;
\langle X,Y\rangle=\langle Y,X\rangle=0,
$$
we obtain a sub-Riemannian metric $\langle\cdot,\cdot\rangle$ and denote its induced norm by $|\cdot|_h$.
An absolutely continuous curve $\gamma:[a,b]\to\mathbb{H}^1$ (in the Euclidean sense) with
$
\gamma(\tau)=(\gamma_h(\tau),\gamma_3(\tau))\in\mathbb{C}\times\mathbb{R}
$, $\gamma_h(\tau)=x(\tau)+iy(\tau)$, $\gamma_3(\tau)=t(\tau)$,
is called {\it horizontal curve} if $\dot{\gamma}\in H_{\gamma(\tau)}(\mathbb{H}^1)$ for almost all $\tau\in[a,b]$, in other words,
$$
\dot{t}(\tau)=2y\dot{x}(\tau)-2x\dot{y}(\tau)=-2\Im\left(\overline{z(\tau)}\dot{z}(\tau)\right)
$$
for almost all $\tau\in[a,b]$.
The {\it horizontal length} of a smooth rectifiable curve $\gamma=(\gamma_h,\gamma_3)$ with respect to $|\cdot|_h$ is given by
$$
\ell_h(\gamma)=\int_a^b|\dot{\gamma}_h(\tau)|_h{\rm d}\tau=\int_a^b\left(\langle\dot{\gamma}(\tau),X_{\gamma(\tau)}\rangle^2+
\langle\dot{\gamma}(\tau),Y_{\gamma(\tau)}\rangle^2\right)^{1/2}{\rm d}\tau.
$$
The {\it Carnot-Carath\'eodory} distance $\mathrm{d}_{cc}(p,q)$ between any two points $p,q\in\mathbb{H}^1$ is then the infimum of horizontal lengths of all horizontal curves joining $p,q$. The metrics $\mathrm{d}_{\mathbb{H}^1}$ and $\mathrm{d}_{cc}$ are bi-Lipschitz equivalent; note though that $\mathrm{d}_{cc}$ is a path metric whereas $\mathrm{d}_{\mathbb{H}^1}$ is not.

Conformal mappings of $\mathbb{H}^1$ with respect to both Kor\'anyi and Carnot-Carath\'eodory metrics in the Heisenberg group, comprise the following transformations:
\begin{itemize}
\item Left translations $L_p$, $p\in\mathbb{H}^1$, defined by
$$
L_p(q)=p*q,
$$
for each $q\in\mathbb{H}^1$.
\item Rotations $R_\theta$, $\theta\in\mathbb{R}$, defined by
$$
R_\theta(z,t)=(e^{i\theta}z,t),
$$
for each $(z,t)\in\mathbb{H}^1$.
\item Dilations $D_\delta$, $\delta>0$, defined by
$$
D_\delta(z,t)=(\delta z,\delta^2 t),
$$
for each $q\in\mathbb{H}^1$.
\item Conjugation $j$, defined by
$$
j(z,t)=(\overline{z},-t),
$$
for each $q\in\mathbb{H}^1$.
\end{itemize}

\subsection{Contact and quasiconformal transformations}
A transformation $f:\Omega\to\Omega^\prime$
in $\mathbb{H}^1$ between domains $\Omega$ and $\Omega^\prime$ of $\mathbb{H}^1$ is called a {\it contact transformation }, if it preserves the contact structure, i.e. the induced mapping $f^*$ on the 1-form satisfies
$$
f^*\omega=\lambda\omega,
$$
for some non-vanishing real valued function $\lambda$.

The proof of the following proposition is straightforward and may be found for instance in \cite{KR85}.
\begin{prop}
Left-transformations, rotations, dilations, conjugation and inversion are all contact transformations of $\mathbb{H}^1$.
\end{prop}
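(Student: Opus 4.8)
The plan is to verify directly, for each of the five transformations, that the pullback of the contact form $\omega = \mathrm{d}t + 2\,\Im(\overline{z}\,\mathrm{d}z)$ has the form $\lambda\omega$ with $\lambda$ nowhere zero. The computation is streamlined by the following observation: if a smooth map is written in coordinates as $f(z,t) = (w,s)$ with $w$ complex-valued and $s$ real-valued functions of $(z,t)$, then by definition $f^*\omega = \mathrm{d}s + 2\,\Im(\overline{w}\,\mathrm{d}w)$. Thus for each transformation I would substitute the explicit expressions for $w$ and $s$, expand $\mathrm{d}w$ and $\mathrm{d}s$, and collect terms until the right-hand side is visibly a scalar multiple of $\omega$. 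Writing $z = x+iy$ and abbreviating $P = \Re(\overline{z}\,\mathrm{d}z) = \tfrac12\,\mathrm{d}|z|^2$ and $Q = \Im(\overline{z}\,\mathrm{d}z)$ keeps the bookkeeping compact.

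The first four cases are short. For a left translation $L_p(z,t) = (z_0 + z,\, t_0 + t + 2\,\Im(z_0\overline{z}))$ one has $w = z_0 + z$ and $\mathrm{d}s = \mathrm{d}t + 2\,\mathrm{d}(\Im(z_0\overline{z}))$; the cross terms coming from $\mathrm{d}s$ cancel exactly against the terms $2\,\Im(\overline{z_0}\,\mathrm{d}z)$ produced by $2\,\Im(\overline{w}\,\mathrm{d}w)$, leaving $L_p^*\omega = \omega$, i.e.\ $\lambda \equiv 1$. For a rotation $R_\theta$ one has $\overline{w}\,\mathrm{d}w = e^{-i\theta}\overline{z}\cdot e^{i\theta}\,\mathrm{d}z = \overline{z}\,\mathrm{d}z$ and $\mathrm{d}s = \mathrm{d}t$, so again $\lambda \equiv 1$. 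For a dilation $D_\delta$ one gets $\overline{w}\,\mathrm{d}w = \delta^2\,\overline{z}\,\mathrm{d}z$ and $\mathrm{d}s = \delta^2\,\mathrm{d}t$, whence $D_\delta^*\omega = \delta^2\omega$ and $\lambda \equiv \delta^2 > 0$. For the conjugation $j$, complex conjugation turns $\Im(\overline{z}\,\mathrm{d}z)$ into $-\Im(\overline{z}\,\mathrm{d}z)$ while $\mathrm{d}s = -\mathrm{d}t$, giving $j^*\omega = -\omega$ and $\lambda \equiv -1$.

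The main obstacle is the inversion, which is the only genuinely nontrivial computation. Writing it in the gauge-adapted form $\iota(z,t) = \bigl(-z/(|z|^2 - it),\, -t/(|z|^4+t^2)\bigr)$ and setting $\mathfrak{p} = |z|^2 - it$, $N = |\mathfrak{p}|^2 = |z|^4 + t^2$, I would compute $\mathrm{d}w$ via the quotient rule and use the identities $\overline{\mathfrak{p}}\,\mathfrak{p} = N$ and $\mathrm{d}\mathfrak{p} = 2P - i\,\mathrm{d}t$ to express both $2\,\Im(\overline{w}\,\mathrm{d}w)$ and $\mathrm{d}s = \mathrm{d}(-t/N)$ as combinations of $P$, $Q$ and $\mathrm{d}t$ with denominators $N$ and $N^2$. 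The delicate point is that the $P$-terms must cancel and the $\mathrm{d}t$-terms must combine through the relation $|z|^4 + t^2 = N$; carrying this out yields $\iota^*\omega = N^{-1}(\mathrm{d}t + 2Q) = \tfrac{1}{|z|^4 + t^2}\,\omega$, so that $\lambda = |(z,t)|_{\mathbb{H}^1}^{-4}$, which is strictly positive on $\mathbb{H}^1\setminus\{0\}$ where $\iota$ is defined. Since a nowhere-vanishing $\lambda$ is exhibited in every case, all five maps preserve the contact structure, completing the proof.
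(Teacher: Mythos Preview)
Your argument is correct and follows exactly the direct-verification strategy the paper has in mind; indeed, the paper itself does not write out a proof at all but simply declares the result ``straightforward'' and refers to \cite{KR85}. Your computations for $L_p$, $R_\theta$, $D_\delta$, and $j$ are accurate (yielding $\lambda=1,\,1,\,\delta^2,\,-1$ respectively), and the inversion case also checks: with $\mathfrak{p}=|z|^2-it$ one has $\iota^*\mathfrak{p}=1/\mathfrak{p}$, and combining the identity $\omega=i(\mathrm{d}\mathfrak{p}-2\overline{z}\,\mathrm{d}z)$ with $2|z|^2-\overline{\mathfrak{p}}=\mathfrak{p}$ gives $\iota^*\omega=(|z|^4+t^2)^{-1}\omega$ exactly as you state.
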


Let
$$
Z=\frac{1}{2}(X-iY)=\frac{\partial}{\partial z}+i\overline{z}\frac{\partial}{\partial t},\quad \overline{Z}=\frac{1}{2}(X+iY)=\frac{\partial}{\partial \overline{z}}-iz\frac{\partial}{\partial t}.
$$
Let also $f:\Omega\to\Omega'$ be a $C^2$ orientation preserving diffeomorphism between domains in $\mathbb{H}^1$. We shall write $f=(f_I,f_3)$, $f_I=f_1+if_2)$. The {\it distortion function} $K(f,\cdot)$ of $f$ is defined by
$$
K(f,p)=\frac{|Zf_I(p)|+|\overline{Z}f_I(p)|}{|Zf_I(p)|-|\overline{Z}f_I(p)|},\quad p\in\Omega.
$$
The {\it maximal distortion} $K=K_f$ is then given by
$$
K=\sup_{p\in\Omega}K(f,p).
$$
$f$ is called {\it $K$-quasiconformal} if it is contact and $K\ge 1$. It is well-known that $1$-quasiconformal mappings of $\mathbb{H}^1$ are conformal and comprise elements of ${\rm SU}(2,1)$ acting on $\mathbb{H}^1$, that is, compositions of left translations, rotations, dilations, conjugation and inversion.

\medskip

For further reference we prove:
\begin{lem}\label{lem-L}
For $a,b\in\R_*$, let $L=L_{a,b}:\mathbb{H}^1\to\mathbb{H}^1$ be the linear mapping given by $$L=L(x,y,t)=(ax,by,abt),$$ for all $(x,y,t)\in\mathbb{H}^1$. Then $L$ is a contact transformation.
\end{lem}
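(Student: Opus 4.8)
The plan is to verify the defining condition $L^*\omega=\lambda\omega$ directly, using the contact form $\omega={\rm d}t+2(x\,{\rm d}y-y\,{\rm d}x)$ introduced in the preceding subsection. Since $L$ is given explicitly in coordinates by $(x,y,t)\mapsto(ax,by,abt)$, the pullback is obtained by a single substitution, so no structural argument is needed; the whole content is a short computation.

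First I would write the image coordinates as $X=ax$, $Y=by$, $T=abt$ and record their differentials ${\rm d}X=a\,{\rm d}x$, ${\rm d}Y=b\,{\rm d}y$, ${\rm d}T=ab\,{\rm d}t$. Substituting these into $\omega$ evaluated at the image point gives
$$
L^*\omega={\rm d}T+2(X\,{\rm d}Y-Y\,{\rm d}X)=ab\,{\rm d}t+2\bigl(ax\cdot b\,{\rm d}y-by\cdot a\,{\rm d}x\bigr).
$$
The key observation is that both the vertical term and the horizontal term carry the common factor $ab$: the former because the $t$-coordinate is scaled by $ab$, and the latter because each of the products $x\,{\rm d}y$ and $y\,{\rm d}x$ picks up exactly one factor $a$ and one factor $b$. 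Factoring this out yields
$$
L^*\omega=ab\bigl({\rm d}t+2(x\,{\rm d}y-y\,{\rm d}x)\bigr)=ab\,\omega.
$$

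This exhibits $\lambda=ab$, which is a constant, hence non-vanishing, real-valued function precisely because $a,b\in\R_*$; thus $L$ satisfies the definition of a contact transformation. There is no genuine obstacle here. The only point that deserves attention is that the paper's normalisation assigns the $t$-coordinate the weight-two scaling $abt$ rather than $t$ or $(ab)^2t$, and it is exactly this choice that makes the vertical and horizontal groups of terms share the single factor $ab$ and so produces a true scalar multiple of $\omega$. Once the coordinate scaling is matched against the factor $2$ appearing in $\omega$, the conclusion is immediate.
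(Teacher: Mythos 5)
Your proof is correct and coincides with the paper's own argument: both compute the pullback $L^*\omega = {\rm d}(abt)+2\bigl(ax\,{\rm d}(by)-by\,{\rm d}(ax)\bigr)=ab\,\omega$ directly and conclude with $\lambda=ab\neq 0$. Your added remark about why the weight-two scaling $abt$ is exactly what makes the factorisation work is a nice observation, but the substance is identical to the paper's proof.
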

\begin{proof}
We have first that $L$ is contact:
\begin{eqnarray*}
L^*\omega&=&{\rm d}(abt)+2ax{\rm d}(by)-2by{\rm d}(ax)\\
&=&ab\;\omega.
\end{eqnarray*}
We now write equivalently
$$
L(z,t)=\left(L_I(z,t),L_3(z,t)\right)=\left(\frac{1}{2}\left[(a+b)z+(a-b)\overline{z}\right],\;abt\right).
$$
Since
$$
ZL_I=\frac{a+b}{2},\quad \overline{Z}L_I=\frac{a-b}{2},
$$
we have
$$
K(L,(z,t))=\frac{|a+b|+|a-b|}{|a+b|-|a-b|}=K_L.
$$
If $a=b$, then $K_L=1$ (and $L$ is simply a dilation possibly composed with a rotation). If $a\neq b$, then $K_L>1$.
\end{proof}

\subsection{Horizontal gradient}
Let $X,Y$ be the left invariant vector fields as above and
suppose that $u:\mH\to\R$ is a $C^1$ function defined in a domain $\Omega\subset\mathbb{H}^1$. Then if $p\in \Omega$, the {\it horizontal gradient of $u$} at $p$  is given by
$$\nabla^h_pu=(X_p (u))X_p+(Y_p(u))Y_p$$
and its norm is
$$|\nabla^h_pu|=\sqrt{(X_p(u))^2+(Y_p(u))^2}.$$
\subsection{Modulus and capacity}\label{sec-modcap}
Let $\omega\subset\mathbb{H}^1$ be a domain in the Heisenberg group and let also $\Gamma$ be a family of locally rectifiable curves in $\Omega$. Denote by ${\rm d}{\mathcal{L}^3}$ the volume element of the usual Lebesgue measure in $\mathbb{C}\times\mathbb{R}$ and by ${\rm d}s$ the horizontal arc length element of a curve $\gamma\in \Gamma$. The {\it modulus of $\Gamma$} is given by
$$
{\rm mod}(\Gamma)=\inf\iiint_{\Omega}\rho^4{\rm d}{\mathcal{L}^3},
$$
where the infimum is taken over all Borel functions $\rho:\; \Omega\to[0,\infty]$ satisfying
\begin{equation}\label{eq0}
  \int_\gamma\rho{\rm d}s\geq1,
\end{equation}
for all $\gamma\in\Gamma$. Functions $\rho$ satisfying (\ref{eq0}) are called \textit{admissible} for $\Gamma$; the set of all admissible functions for $\Gamma$ shall be denoted by
${\rm adm}(\Gamma).
$


Let $E$ and $F$ two nonempty disjoint closed sets in $\mathbb{H}^1$; we assume that $E$ is compact and $0\in E$ and also $\infty\in F$. We denote by $(E,F)$ the domain $\mathbb{H}^1\setminus (E\cup F)$; such a domain shall  be called a {\it condenser}. 
Its \textit{capacity} is given by
\begin{equation}\label{0.1}
  {\rm cap}(E,F)=\inf\int_{\mathbb{H}^1}|\nabla^h u|^4{\rm d}{\mathcal{L}^3};
\end{equation}
here $|\nabla^h u|$ is the norm of the horizontal differential of $u$ and the infimum is taken over all $u$ in $\mathbb{H}^1$ such that $u|E=1$ and $u|F=0$. Such a function $u$ is called \textit{admissible} for the condenser $(E,F)$.

Both modulus of a curve family and capacity of a condenser are conformal invariants: they are left infariant by conformal transformations of the Heisenberg group. In the proof of our theorem we shall make use of the following proposition (see Proposition 2.17 of \cite{He-KO98}):
\begin{prop}\label{pro1}
If $(E,F)$ is a condenser in $\mathbb{H}^1$ and $\Gamma$ is the family of locally smooth rectifiable curves joining its two pieces of the boundary,  then
\begin{equation*}
  {\rm cap}(E,F)={\rm mod}(\Gamma).
\end{equation*}
\end{prop}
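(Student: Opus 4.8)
The plan is to prove the two inequalities ${\rm mod}(\Gamma)\le{\rm cap}(E,F)$ and ${\rm cap}(E,F)\le{\rm mod}(\Gamma)$ separately. The exponent $4$ occurring in both the capacity integral \eqref{0.1} and the modulus is the homogeneous dimension $Q=4$ of $\mathbb{H}^1$, and it is precisely this matching of exponents that forces the two quantities to coincide. Throughout I parametrise the curves of $\Gamma$ by horizontal arc length, so that for $\gamma\in\Gamma$ the horizontal tangent satisfies $|\dot\gamma_h|_h=1$.

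For the first inequality, let $u$ be admissible for the condenser, so $u|E=1$ and $u|F=0$, and set $\rho:=|\nabla^h u|$. Along a curve $\gamma\in\Gamma$ joining $E$ to $F$ we may write $\dot\gamma=a\,X_\gamma+b\,Y_\gamma$ with $a^2+b^2=1$, whence the chain rule and the Cauchy--Schwarz inequality give $\bigl|\tfrac{d}{ds}u(\gamma(s))\bigr|=|a\,X_\gamma u+b\,Y_\gamma u|\le|\nabla^h u|$. Integrating and using the boundary values,
$$
\int_\gamma|\nabla^h u|\,{\rm d}s\ge\Bigl|\int_\gamma\tfrac{d}{ds}u(\gamma(s))\,{\rm d}s\Bigr|=|u(\text{end})-u(\text{start})|=1,
$$
so $\rho\in{\rm adm}(\Gamma)$ and ${\rm mod}(\Gamma)\le\iiint_{\mathbb{H}^1}|\nabla^h u|^4\,{\rm d}{\mathcal L}^3$. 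Taking the infimum over admissible $u$ yields ${\rm mod}(\Gamma)\le{\rm cap}(E,F)$. Since $u$ lies in the horizontal Sobolev class rather than being smooth, the chain-rule step is legitimate only along those $\gamma$ on which $u$ is absolutely continuous; by Fuglede's theorem the exceptional family has vanishing modulus, so the estimate is unaffected.

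For the reverse inequality, fix $\rho\in{\rm adm}(\Gamma)$ with $\iiint\rho^4\,{\rm d}{\mathcal L}^3<\infty$ (otherwise nothing is to prove), and define the potential
$$
v(p)=\inf\Bigl\{\int_\sigma\rho\,{\rm d}s:\ \sigma\text{ a horizontal curve joining }E\text{ to }p\Bigr\},
$$
and put $u=1-\min\{1,v\}$. Then $u=1$ on $E$, since the trivial curve gives $v=0$ there, and $u=0$ on $F$, since any horizontal curve from $E$ to a point of $F$ contains a subcurve in $\Gamma$, so admissibility of $\rho$ forces $v\ge1$; hence $u$ is a candidate in \eqref{0.1}. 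The construction makes $\rho$ an upper gradient for $v$: joining nearby points $p,q$ by a short CC-geodesic $\sigma_{pq}$ gives $|v(p)-v(q)|\le\int_{\sigma_{pq}}\rho\,{\rm d}s$, which upon localisation yields the pointwise bound $|\nabla^h u|\le\rho$ almost everywhere. Granting this,
$$
{\rm cap}(E,F)\le\iiint_{\mathbb{H}^1}|\nabla^h u|^4\,{\rm d}{\mathcal L}^3\le\iiint_{\mathbb{H}^1}\rho^4\,{\rm d}{\mathcal L}^3,
$$
and taking the infimum over $\rho$ gives ${\rm cap}(E,F)\le{\rm mod}(\Gamma)$. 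Combining the two inequalities gives the equality.

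The main obstacle is the regularity of the potential $u$ in the second step: one must verify that $u$ belongs to $HW^{1,4}_{\rm loc}$, that it is absolutely continuous along almost every horizontal curve, and that $\rho$ is genuinely a weak upper gradient realising $|\nabla^h u|\le\rho$ a.e., rather than merely controlling oscillation. The cleanest route is to establish the upper-gradient estimate first for continuous $\rho$, where connecting nearby $p,q$ by a CC-geodesic and using $\int_{\sigma_{pq}}\rho\,{\rm d}s\approx\rho(p)\,{\rm d}_{cc}(p,q)$ makes the bound $|\nabla^h v|\le\rho$ transparent, and then to pass to general Borel $\rho$ by approximation, once more discarding exceptional curve families of zero modulus via Fuglede's theorem. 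This is exactly the content of the abstract modulus--capacity theory on Ahlfors-regular spaces supporting a Poincar\'e inequality, of which $\mathbb{H}^1$ is the model example, and the full details may be found in \cite{He-KO98}.
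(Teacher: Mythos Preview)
The paper does not actually prove this proposition: it is stated without proof and attributed to \cite[Proposition~2.17]{He-KO98}. Your sketch reproduces the standard two-inequality argument from that reference --- admissible potentials give admissible densities via $\rho=|\nabla^h u|$, and conversely an admissible density generates an admissible potential as the truncated line integral $v(p)=\inf_\sigma\int_\sigma\rho\,{\rm d}s$ --- and you correctly flag the genuine technical point (Sobolev regularity of $v$ and the passage from upper gradient to weak horizontal gradient) while deferring it to the same source. So your approach is exactly the one the paper implicitly relies on; there is nothing to compare.
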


\section{The Kor\'anyi ellipsoidal ring and its modulus}\label{sec-main}
In this section we prove Theorem \ref{thm1}; to do so we set up the notation first.
For fixed $A>1$, let
the {\it Kor\'anyi spherical ring}
$$\mathcal{R}=\mathcal{R}_{1,A}=\left\{(x,y,t)\in \mathbb{H}^1:1\leq\Big(x^2+y^2\Big)^2+t^2\leq A^4\right\}.$$
Fix numbers $0<b<a$. We shall consider the linear map $L=L_{a,b}:\mH^1\to\mH^1$ given by
$$L(x,y,t)=(ax,by,abt).$$
By Lemma \ref{lem-L}, $L$ is a contact quasiconformal self-map of $\mH^1$, its distortion function $K=K_L$ is constant and equals to $a/b$. The {\it Kor\'anyi ellipsoidal ring} in the Heisenberg group
$$\mathcal{E}=\mathcal{E}_{1,A,a,b}=\left\{(x,y,t)\in \mathbb{H}^1:1\leq\left(\frac{x^2}{a^2}+\frac{y^2}{b^2}\right)^2+\frac{t^2}{a^2b^2}\leq A^4\right\}$$
is the $L$-image of the Kor\'anyi spherical ring $\mathcal{R}_{1,A}$.

\medskip

To prove our theorem, we shall use Proposition \ref{pro1}; the result will follow after estimating the capacity $\mathrm{cap}(\mathcal{E})$ from above and the modulus ${\rm mod}(\calE)$ from below by the same number, which in our setup is
$$
\left(\frac{3}{8}\Big(K^2+\frac{1}{K^2}\Big)+\frac{1}{4}\right)\frac{\pi^2}{(\log A)^3}.
$$


\medskip

\noindent {\it Proof of Theorem \ref{thm1}.}
The proof is divided into two steps.
\subsubsection*{Step 1. Estimating the capacity}
 In the first step we prove that 
 \begin{equation}\label{cap-est}
{\rm cap}(\calE)\leq\left(\frac{3}{8}\Big(K^2+\frac{1}{K^2}\Big)+\frac{1}{4}\right)\frac{\pi^2}{(\log A)^3}.
\end{equation}
For this purpose, we consider the function $u_0:\mH^1\to\R$ given by
$$u_0(x,y,t)=\frac{\log \left(\left(\frac{x^2}{a^2}+\frac{y^2}{b^2}\right)^2+\frac{t^2}{a^2b^2}\right)}{4\log A}.$$
This is an admissible function, i.e., it satisfies the following:
\begin{itemize}
  \item If $\mathcal{E}_{0,1}$ is the inner boundary of $\calE$,
  $$\mathcal{E}_{0,1}=\left\{(x,y,t)\in \mathbb{H}^1:\left(\frac{x^2}{a^2}+\frac{y^2}{b^2}\right)^2+\frac{t^2}{a^2b^2}=1\right\},$$ then $u_0\equiv 0$ in $\calE_{0,1}$.
  \item If $\mathcal{E}_{0,A}$ is the outer boundary of $\calE$,
  $$\mathcal{E}_{0,A}=\left\{(x,y,t)\in \mathbb{H}^1:\left(\frac{x^2}{a^2}+\frac{y^2}{b^2}\right)^2+\frac{t^2}{a^2b^2}= A^4\right\},$$ then $u_0\equiv 1$ in $\mathcal{E}_{0,A}$.
\end{itemize}
Our aim is to explicitly calculate the following integral:
 $$\mathcal{I}_0=\iiint_{\mathcal{E}}|\nabla^hu_0|^4 \mathrm{d}x\mathrm{d}y\mathrm{d}t.$$
 For this, we introduce Kor\'anyi ellipsoidal coordinates in the Heisenberg group. These coordinates are given by:

 \begin{eqnarray*}
  x & = &ar\sqrt{\sin\varphi}\cos\theta, \\
  y & = &br\sqrt{\sin\varphi}\sin\theta,\\
  t & = &abr^2\cos\varphi,
\end{eqnarray*}
where $r\in[0,+\infty)$, $\varphi\in[0,\pi]$, $\theta\in[0,2\pi]$. The Jacobian determinant is then
$$J(r,\varphi,\theta)=\left|\begin{array}{ccc}
                         a\sqrt{\sin\varphi}\cos\theta & \frac{ar\cos\varphi\cos\theta}{2\sqrt{\sin\varphi}} & -ar\sqrt{\sin\varphi}\sin\theta \\
                        b\sqrt{\sin\varphi}\sin\theta & \frac{br\cos\varphi\sin\theta}{2\sqrt{\sin\varphi}} & br\sqrt{\sin\varphi}\cos\theta \\
                         2abr\cos\varphi & -abr^2\sin\varphi & 0
                       \end{array}
                       \right|
= a^2b^2r^3.
$$
In Kor\'anyi ellipsoidal coordinates the function $u_0$ is given by
$$
u_0(r,\theta,\varphi)=\frac{\log r}{\log A}.
$$
Moreover,
\begin{eqnarray*}
&&
X=\left(r_x+2y r_t\right)\partial_r
+\left(\theta_x+2y \theta_t\right)\partial_\theta
+\left(\varphi_x+2y \varphi_t\right)\partial_\varphi,\\
&&
Y=\left(r_y-2x r_t\right)\partial_r
+\left(\theta_y-2x \theta_t\right)\partial_\theta
+\left(\varphi_y-2x \varphi_t\right)\partial_\varphi.
\end{eqnarray*}
Therefore
\begin{equation*}
Xu_0
=\frac{\sqrt{\sin\varphi}}{ar\log A}\sin\left(\varphi+\theta\right),\quad
Yu_0
=\frac{-\sqrt{\sin\varphi}}{br\log A}\cos\left(\varphi+\theta\right).
\end{equation*}
Let $K=a/b$. Then by using Fubini's Theorem we have
\begin{eqnarray*}
\mathcal{I}_0
&=& \int^{2\pi}_0\left(\int^{\pi}_0\left(\int^A_1\left( \frac{\sin\varphi}{a^2r(\log A)^2}\sin^2(\varphi+\theta)+\frac{\sin\varphi}{b^2r(\log A)^2}\cos^2(\varphi+\theta)\right)^2a^2b^2r^3 \mathrm{d}r\right)\mathrm{d}\varphi\right)\mathrm{d}\theta\\
& =& \frac{1}{(\log A)^3}\int^{2\pi}_0\left(\int^{\pi}_0
\left(\sin\varphi\Big(\frac{1}{K}\sin^2(\varphi+\theta)+K\cos^2(\varphi+\theta)\Big)\right)^2\mathrm{d}
\varphi\right)\mathrm{d}\theta\\
& =&\frac{1}{(\log A)^3}\cdot\mathcal{J}_0.
\end{eqnarray*}
We only have to show that $$\mathcal{J}_0
   = \left(\frac{3}{8}\Big(K^2+\frac{1}{K^2}\Big)+\frac{1}{4}\right){\pi^2}.$$
To do this, we may first expand
\begin{align*}
&  \left(\sin\varphi\Big(\frac{1}{K}\sin^2(\varphi+\theta)+K\cos^2(\varphi+\theta)\Big)\right)^2 \\
& = \frac{1}{K^2}\sin^2\varphi\sin^4(\varphi+\theta)+K^2\sin^2\varphi\cos^4(\varphi+\theta)+2\sin^2\varphi\sin^2(\varphi+\theta)\cos^2(\varphi+\theta).~~~~
\end{align*}
We then calculate separately the integrals
\begin{eqnarray*}
&&
\mathcal{J}_0^1=\int^{2\pi}_0\left(\int^{\pi}_0 \sin^2\varphi\sin^4(\varphi+\theta)\mathrm{d}\varphi\right)\mathrm{d}\theta,\\
&&
\mathcal{J}_0^2=\int^{2\pi}_0\left(\int^{\pi}_0 \sin^2\varphi\cos^4(\varphi+\theta)\mathrm{d}\varphi\right)\mathrm{d}\theta,\\
&&
\mathcal{J}_0^3=\int^{2\pi}_0\left(\int^{\pi}_0 2\sin^2\varphi\sin^2(\varphi+\theta)\cos^2(\varphi+\theta)\mathrm{d}\varphi\right)\mathrm{d}\theta.
\end{eqnarray*}
It is a simple calculus exercise then to show that
$$
\mathcal{J}_0^1=\frac{3\pi^2}{8}=\mathcal{J}_0^2,\quad \mathcal{J}_0^3=\frac{\pi^2}{4}.
$$
We thus have
 $$
 \mathcal{I}_0=\frac{1}{(\log A)^3}\cdot\mathcal{J}_0=\frac{1}{(\log A)^3}\left(K^2\mathcal{J}_0^1+\frac{1}{K^2}\mathcal{J}_0^2+\mathcal{J}_0^3\right)=
 \left(\frac{3}{8}\Big(K^2+\frac{1}{K^2}\Big)+\frac{1}{4}\right)\frac{\pi^2}{(\log A)^3}.
 $$
Since by the definition of capacity,
$$\mathrm{cap}(\mathcal{E})=\inf_{u\;\text{admissible}}\iiint_{\mathcal{E}}|\nabla^hu|^4\mathrm{d}x\mathrm{d}y\mathrm{d}t,$$
we have
$$ 
\mathrm{cap}(\mathcal{E})\leq\mathcal{I}_0
$$
which proves (\ref{cap-est}); this completes the first part of the proof.
\subsubsection*{Step 2. Estimating the modulus}
In the second step we will show that the following holds:
\begin{equation}\label{ineq2}
\left(\frac{3}{8}\Big(K^2+\frac{1}{K^2}\Big)+\frac{1}{4}\right)\frac{\pi^2}{(\log A)^3}\leq \mathrm{mod}(\mathcal{E}).
\end{equation}
For this purpose, we consider the family $\Gamma_0$ comprising the integral curves of the horizontal vector field $\nabla^h u_0$. This family belongs to the greater family $\Gamma$ of horizontal arcs joining the two pieces of the boundary of $\calE$ and therefore
$$
{\rm mod}(\Gamma_0)\le{\rm mod}(\Gamma)={\rm mod}(\calE).
$$ 
If $\gamma\in\Gamma_0$, we shall consider it parametrised by horizontal arc-length,
$$
\gamma(s)=\left(x(s),y(s),t(s)\right),
\,s\in[0,\ell(\gamma)],
$$
and we also let
\begin{equation}\label{xyz}
x(s)=ar(s)\sqrt{\sin\varphi(s)}
\cos\theta(s),\;y(s)=br(s)\sqrt{\sin\varphi(s)}\sin\theta(s),\;
t(s)=abr^2(s)\cos\varphi(s).
\end{equation}
The differential equations of curves in $\Gamma_0$ are then
\begin{align}
&\dot{x}(s)=\frac{\sqrt{\sin\varphi(s)}}{ar\log A}\sin(\varphi(s)+\theta(s));\label{eq3}\\
&\dot{y}(s)=\frac{-\sqrt{\sin\varphi(s)}}{br\log A}\cos(\varphi(s)+\theta(s));\label{eq4}\\
&\dot{t}(s)+ 2x(s)\dot{y}(s)-2y(s)\dot{x}(s)=0,\label{eq5}
\end{align}
with the latter being the horizontality condition.
By differentiating the equations in (\ref{xyz}) we also have
\begin{eqnarray}\label{x}
\dot{x}&= &a\sqrt{\sin\varphi}\cos\theta\,\dot{r}+\frac{ar}{2\sqrt{\sin\varphi}}\cos\varphi\cos\theta\,\dot{\varphi}-ar\sqrt{\sin\varphi}\sin\theta\,\dot{\theta},\\
\label{y}
\dot{y}&=&b\sqrt{\sin\varphi}\cos\theta\,\dot{r}+\frac{br}{2\sqrt{\sin\varphi}}\cos\varphi\sin\theta\,\dot{\varphi}+br\sqrt{\sin\varphi}\cos\theta\,\dot{\theta},\\
\label{z}
\dot{t}&=&2ab\cos\varphi\, r\dot{r}-abr^2\sin\varphi\,\dot{\varphi}.
\end{eqnarray}
Our aim is to show that
\begin{equation}\label{rr}
r\dot{r} = \frac{\sin\varphi}{\log A}\left(\frac{\sin^2(\varphi+\theta)}{a^2}+\frac{\cos^2(\varphi+\theta)}{b^2}\right)
\end{equation}
holds.
By equating the right hand-sides of (\ref{eq3}), (\ref{x}) and the right hand-sides of (\ref{eq4}), (\ref{y}) we respectively have
\begin{eqnarray}
\sin\varphi\cos\theta\,r\dot{r}+\frac{1}{2}\cos\varphi\cos\theta\,r^2\dot{\varphi}-\sin\varphi\sin\theta\,r^2\dot{\theta} &=&  \frac{\sqrt{\sin\varphi}}{a^2\log A}\sin(\varphi+\theta),\label{eq6}\\
\sin\varphi\sin\theta\,r\dot{r}+\frac{1}{2}\cos\varphi\sin\theta\,r^2\dot{\varphi}-\sin\varphi\cos\theta\,r^2\dot{\theta} &=& \frac{-\sqrt{\sin\varphi}}{b^2\log A}\cos(\varphi+\theta).\label{eq7}
\end{eqnarray}
On the other hand, from (\ref{eq5}) and (\ref{z}) we deduce
$$
 -2x\dot{y}+2y\dot{x}=2ab\cos\varphi\, r\dot{r}-abr^2\sin\varphi\,\dot{\varphi}.
$$
Since
\begin{eqnarray*}
x\dot y
&=& ab\sin\varphi\sin\theta\cos\theta\,r\dot{r}+\frac{ab}{2}\cos\varphi\sin\theta\cos\theta\,r^2\dot{\varphi}
+ab\sin\varphi\cos^2\theta\,r^2\dot{\theta,}\\
y\dot{x}
&=& ab\sin\varphi\sin\theta\cos\theta\,r\dot{r}+\frac{ab}{2}\cos\varphi\sin\theta\cos\theta\,r^2\dot{\varphi}-ab\sin\varphi\sin^2\theta\,r^2\dot{\theta},
\end{eqnarray*}
we finally have
\begin{equation}\label{eq8}
\cos\varphi\,r\dot{r}-\frac{1}{2}\sin\varphi\,r^2\dot{\varphi}+ \sin\varphi\,r^2\dot{\theta}=0,
\end{equation}
and our desired equation (\ref{rr}) follows by solving the system of (\ref{eq6}),(\ref{eq7})  and (\ref{eq8}).
Now, for an arbitrary $\gamma\in\Gamma_0$ we have that its horizontal norm is
\begin{align*}
|\dot{\gamma}|_h&=\sqrt{\dot{x}^2(s)+\dot{y}^2(s)}\\
&=\sqrt{\frac{1}{r^2}\left(\frac{\sin\varphi}{\log^2 A}\Big(\frac{\sin^2(\varphi+\theta)}{a^2}+\frac{\sin^2(\varphi+\theta)}{b^2}\Big)\right)}\\
&=\sqrt{\frac{1}{r^2\log A}r\dot{r}}\\
&=\Big(\frac{1}{\log A}\Big)^{1/2}\Big(\frac{\dot{r}}{r}\Big)^{1/2},
\end{align*}
where we have used (\ref{rr}).
Thus if $\rho\in \mathrm{adm}(\Gamma_0)$ we have
$$
1\leq\frac{1}{\log^{1/2} A}\int^{\ell(\gamma)}_0\rho(\gamma(s))\left(\frac{\dot{r}(s)}{r(s)}\right)^{1/2}\mathrm{d}s
$$
Let $r=r(s)$ and $K=a/b$. Then $\mathrm{d}r=\dot{r}(s)\mathrm{d}s$ and by using again (\ref{rr}) the above inequality is written as
\begin{align*}
1
&\leq\int^A_1\frac{ab\rho(r,\varphi,\theta)}{\sqrt{\sin\varphi\left(b^2\sin^2(\varphi+\theta)+a^2\cos^2(\varphi+\theta)\right)}}\mathrm{d}r\\
&=\int^A_1\rho(r,\varphi,\theta)(ab)^{1/2}r^{3/4}\cdot\frac{(ab)^{1/2}}{r^{3/4}\sqrt{\sin\varphi\left(b^2\sin^2(\varphi+\theta)+a^2\cos^2(\varphi+\theta)\right)}}\mathrm{d}r\\
&=\int^A_1\rho(r,\varphi,\theta)(ab)^{1/2}r^{3/4}\cdot\frac{1}{r^{3/4}\sqrt{\sin\varphi\left(\frac{1}{K}\sin^2(\varphi+\theta)+K\cos^2(\varphi+\theta)\right)}}\mathrm{d}r.
\end{align*}
We apply H\"older's inequality to obtain
$$
1\leq\left(\int^A_1\rho^4\left(r,\varphi,\theta\right)a^2b^2r^3\mathrm{d}r\right)^{1/4}\left(\int^A_1\left(\frac{1}{\sqrt{\sin\varphi\Big(\frac{1}{K}\sin^2(\varphi+\theta)
+K\cos^2(\varphi+\theta)\Big)}}\right)^{4/3}\frac{\mathrm{d}r}{r}\right)^{3/4}.
$$
This implies
$$
\Big(\frac{1}{\log A}\Big)^{3/4}\sqrt{\sin\varphi\Big(\frac{1}{K}\sin^2(\varphi+\theta)
+K\cos^2(\varphi+\theta)\Big)}\leq\Big(\int^A_1\rho^4(r,\varphi,\theta)a^2b^2r^3\mathrm{d}r\Big)^{1/4}.
$$
Take both sides of the above inequality in the fourth power; then
$$
\frac{1}{\log^3 A}\sin^2\varphi\left(\frac{1}{K}\sin^2(\varphi+\theta)
+K\cos^2(\varphi+\theta)\right)^2\leq\int^A_1\rho^4(r,\varphi,\theta)a^2b^2r^3\mathrm{d}r.
$$
By integrating with respect to $\varphi$ and $\theta$ we have after changing the coordinates that
$$
\left(\frac{3}{8}\Big(K^2+\frac{1}{K^2}\Big)+\frac{1}{4}\right)\frac{\pi^2}{(\log A)^3}\leq \iiint_{\mathcal{E}}\rho^4(x,y,t)\mathrm{d}x\mathrm{d}y\mathrm{d}t.
$$
Finally, by taking the infimum over all $\rho\in \mathrm{adm}(\Gamma_0)$ we obtain
$$
\left(\frac{3}{8}\Big(K^2+\frac{1}{K^2}\Big)+\frac{1}{4}\right)\frac{\pi^2}{(\log A)^3}\leq \mathrm{mod}(\Gamma_0)\leq \mathrm{mod}(\mathcal{E})=\mathrm{cap}(\mathcal{E}),
$$
where the last equality holds due to Proposition \ref{pro1}.
This, combined with (\ref{cap-est}) completes the proof.\qed



\end{document}